\numberwithin{equation}{section}
\title{Residually free groups do not admit a uniform polynomial isoperimetric function}
\author{Claudio Llosa Isenrich}
\address{Max Planck Institute for Mathematics, Vivatsgasse 7, 53111 Bonn, Germany}
\email{llosa@mpim-bonn.mpg.de}
\author{Romain Tessera}
\address{Institut de Math\'ematiques de Jussieu-PRG, Universit\'e Paris-Diderot, CNRS, Case 7012, 75205 Paris Cedex 13, France}
\email{romatessera@gmail.com}
\thanks{The first author was supported by a public grant as part of the FMJH. The second author was supported by the grant ANR-14-CE25-0004 ``GAMME''. \newline \indent The authors would like to thank the anonymous referees for their helpful comments and suggestions.}
\keywords{Residually free groups, Dehn functions}
\subjclass[2010]{20F65, (20F05, 20F69)}
\begin{document}

\newcommand{\AAA}{{\mathds A}}
\newcommand{\CC}{{\mathds C}}
\newcommand{\PP}{{\mathbf P}}
\newcommand{\QQ}{{\mathds Q}}
\newcommand{\RR}{{\mathds R}}
\newcommand{\NN}{{\mathds N}}
\newcommand{\ZZ}{{\mathds Z}}
\newcommand{\del}{{\partial}}
\newcommand{\one}{{\mathds {1}}}
\newcommand{\ord}{{\mathcal {O}}}
\newcommand{\ii}{{\mathds {i}}}
\newcommand{\vol}{{\mathrm {vol}}}
\newcommand{\eps}{{\epsilon}}
\def\Mod{{\rm{Mod}}}
\def\C{{\mathds C}}
\def\D{\rm D}
\def\S{\Sigma}
\def\F{{\mathds F}}
\def\FF{\mathcal F}
\def\aut{{\rm{Aut}}}
\def\inn{{\rm{Inn}}}
\def\out{{\rm{Out}}}
\def\isom{{\rm{Isom}}}
\def\mcg{{\rm{MCG}}}
\def\ker{{\rm{ker}}}
\def\im{{\rm{im}}}
\def\dim{{\rm{dim}}}
\def\G{\Gamma}
\def\a{\alpha}
\def\g{\gamma}
\def\L{\Lambda}
\def\Z{{\mathds{Z}}}
\def\H{{\mathds{H}}}
\def\nn{{\bf N}}
\newcommand{\mm}{{\underline{m}}}

\newtheorem{thm}{Theorem}
\newtheorem{clai}[thm]{Claim}

\theoremstyle{plain}
\newtheorem{theorem}{Theorem}[section]
\newtheorem{acknowledgement}[theorem]{Acknowledgement}
\newtheorem{claim}[theorem]{Claim}
\newtheorem{conjecture}[theorem]{Conjecture}
\newtheorem{corollary}[theorem]{Corollary}
\newtheorem{exercise}[theorem]{Exercise}
\newtheorem{lemma}[theorem]{Lemma}
\newtheorem{proposition}[theorem]{Proposition}
\newtheorem{question}{Question}
\newtheorem*{question*}{Question}
\newtheorem{addendum}[theorem]{Addendum}

\theoremstyle{definition}
\newtheorem{remark}[theorem]{Remark}
\newtheorem*{acknowledgements*}{Acknowledgements}
\newtheorem{example}[theorem]{Example}
\newtheorem{definition}[theorem]{Definition}
\newtheorem*{notation*}{Notation}
\newtheorem*{convention*}{Convention}

\renewcommand{\proofname}{Proof}

\begin{abstract}
We show that there is no uniform polynomial isoperimetric function for finitely presented subgroups of direct products of free groups, by producing a sequence of subgroups $G_r\leq F_2^{(1)} \times \dots \times F_2^{(r)}$ of direct products of 2-generated free groups with Dehn functions bounded below by $n^{r}$. The groups $G_r$ are obtained from the examples of non-coabelian subdirect products of free groups constructed by Bridson, Howie, Miller and Short. As a consequence we obtain that residually free groups do not admit a uniform polynomial isoperimetric function.
\end{abstract}

\maketitle

\section{Introduction}

Subgroups of direct products of finitely many free groups (for short SPF groups) form a special class of residually free groups that have attracted a lot of attention due to their interesting topological finiteness properties \cite{BriHowMilSho-02, BriHowMilSho-13}. Indeed, the first examples of groups admitting a classifying space with finite $k$-skeleton, but no classifying space with finite $(k+1)$-skeleton, for $k\geq 2$ -- the Stallings--Bieri groups -- belong to that class \cite{Sta-63,Bie-76}. 

Let us say that a subgroup $G\leq G_1\times \dots \times G_r$ of a finite product of groups $G_i$ is VCA (for virtually coabelian) if there are finite index subgroups $G_{i,0}\leq G_i$ and a surjective homomorphism $\phi: G_{1,0}\times \dots \times G_{r,0}\to \ZZ^k$, such that $\ker(\phi)\leq G$ is a finite index subgroup of $G$.
It turns out that all SPF groups with strong enough finiteness properties are VCA \cite[Corollary 3.5]{Kuc-14}. While not all SPF groups are VCA, they all contain 
 finite index subgroups which are iterated fibre products over nilpotent groups \cite{BriHowMilSho-13}. 
 
SPF groups which have the VCA property have been the focus of various results in the area, shedding light on many aspects of their nature (for instance \cite{BriMil-09,Dis-08,Koc-10, Kuc-14}). In contrast we know much less about SPF groups which are not VCA. In particular, while we know that SPF groups which are not VCA must satisfy strong constraints, there is currently only one class of known examples of this kind. They were constructed by Bridson, Howie, Miller and Short \cite{BriHowMilSho-13} and will play a key role in this work.

A way to study topological properties of groups from a quantitative point of view is to estimate their filling invariants, and in particular their Dehn functions. 
Recall that the Dehn function $\delta_G(n)$ of a finitely presented group $G=\left\langle X \mid R\right\rangle$  is defined as the number of conjugates of relations from $R$ needed to detect if a word in $X$ of length at most $n$ represents the trivial word. Dehn functions have important connections to the solvability of the word problem and to isoperimetric functions in Riemannian geometry, highlighting the importance of understanding this basic invariant \cite{Bri-02,BurTab-02}.

The study of Dehn functions of SPF groups was initiated by Gersten who proved that the Dehn functions of the Stallings--Bieri groups admit a quintic upper bound \cite{Ger-95}. This bound was improved to a cubic bound in \cite{BauBriMilSho-97}. In \cite{Bri-99} Bridson argued that in fact the Dehn functions of Stallings--Bieri groups are quadratic. There was a flaw in his argument, but it was subsequently proved that the assertion that they are quadratic is correct \cite{DisEldRilYou-09, CarFos-17}. 

Bridson conducted a general study of the Dehn functions of cocyclic subgroups of products of groups \cite{Bri-01}. He showed that finitely presented cocyclic subgroups of a product of groups $G_1\times \dots \times G_r$ admit a polynomial isoperimetric function, if the $\delta_{G_i}$ are all polynomial \cite{Bri-01}. In particular it follows directly from his work and the fact that limit groups are CAT(0) \cite{AliBes-06} that all cocyclic subgroups of products of limit groups admit a polynomial isoperimetric function. In \cite{Dis-08} Dison pursued a systematic study of the Dehn functions of coabelian subgroups of direct products of limit groups. He showed that when the group has strong enough finiteness properties, then its Dehn function is polynomial. He also proved that a large class of full subdirect products of limit groups admit a sextic isoperimetric function. More generally for Bestvina--Brady groups, which are a generalization of the Stallings--Bieri groups to subgroups of Right Angled Artin groups, he obtains a quartic bound on their Dehn functions \cite{Dis-08-II}. 

Dison also provides the first example of an SPF group which does not admit a quadratic (or linear) isoperimetric function: he actually shows that his example satisfies a cubic lower bound and a sextic upper bound on its Dehn function \cite{Dis-08-II,Dis-08}. 

These results naturally led Dison to ask the following question:
\begin{question}\cite{Dis-08}\label{que:Dison}
Is there a uniform polynomial bound $p(n)$ such that $\delta_G(n)\preccurlyeq p(n)$ for all SPF groups $G$? 
\label{qnDison}
\end{question}

The purpose of this note is to show that the answer to Question \ref{que:Dison}  is negative. 

\begin{theorem}
 For every $r\geq 3$ there is a finitely presented subgroup $G_r\leq F_2^{(1)}\times \dots \times F_2^{(r)}$ with $\delta_{G_r}(n) \succcurlyeq n^r$.
 \label{thmMain}
\end{theorem}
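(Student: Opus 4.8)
The plan is to realize each $G_r$ as an $r$-fold fibre product taken from the Bridson--Howie--Miller--Short examples \cite{BriHowMilSho-13}, and then to prove the lower bound on $\delta_{G_r}$ by an inductive slicing argument on van Kampen diagrams. Following BHMS, I would fix a finitely presented group $Q$, presented as a quotient $p\colon F\twoheadrightarrow Q$ of a finitely generated free group $F=F(X)$ and chosen so that $Q$ is not virtually abelian, and set
\[
G_r\;=\;\bigl\{(x_1,\dots,x_r)\in F^{(1)}\times\dots\times F^{(r)}\ \bigm|\ p(x_1)=\dots=p(x_r)\bigr\},
\]
a subdirect product of $r$ copies of $F$; composing with an embedding $F\hookrightarrow F_2$ puts $G_r$ inside $F_2^{(1)}\times\dots\times F_2^{(r)}$. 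That $G_r$ is finitely presented for $r\ge 3$, and is non-coabelian, is precisely what the BHMS construction delivers, so the content of the theorem is the bound $\delta_{G_r}(n)\succcurlyeq n^r$. I would work with the generators of $G_r$ given by the diagonal $\Delta(F)=\{(g,\dots,g)\}$ (a free subgroup) together with, for each coordinate $i$ and each defining relator $s$ of $Q$, the fibre generator $e_i(s)$ equal to $s$ in coordinate $i$ and to $1$ in the other coordinates; the structural facts I would use are that $\Delta(F)\cap L_i=\{1\}$ where $L_i:=G_r\cap F^{(i)}\cong\ker p$, that fibre generators over distinct coordinates commute, and that the defining relations of $G_r$ can all be taken to express such commutations together with the interaction of the $e_i$ with $\Delta(F)$.

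Next I would pin down the test words. For $u\in F$ the element $c_i(u):=[\Delta(u)^n,e_i(s)]$ lies in the fibre $L_i$, is a word of length $\preccurlyeq n$ in the above generators, and has $i$-th coordinate $[u^n,s]$ and all other coordinates trivial. Since distinct fibres commute, choosing elements $u^{(1)},\dots,u^{(r)}$ of $F$ and forming the iterated commutator
\[
w_n\;=\;\bigl[c_1(u^{(1)}),\bigl[c_2(u^{(2)}),\bigl[\dots,c_r(u^{(r)})\bigr]\dots\bigr]\bigr]
\]
gives a null-homotopic word with $|w_n|\preccurlyeq n$; its obvious van Kampen diagram is an iterated prism, roughly $\asymp n$ parallel copies of a diagram for the analogous word in one fewer coordinate, glued along the extra coordinate, with a quadratic diagram at the bottom of the recursion. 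This already gives $\mathrm{Area}_{G_r}(w_n)\preccurlyeq n^r$ and makes clear what a matching lower bound must beat.

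For the lower bound I would prove, by induction on $r$, that $\mathrm{Area}_{G_r}(w_n)\succcurlyeq n\cdot\mathrm{Area}_{G_{r-1}}(w_n')$, where $w_n'$ is the analogous word with one coordinate removed, with base case $\mathrm{Area}_{G_3}(w_n)\succcurlyeq n^3$; since $|w_n|\preccurlyeq n$ this yields $\delta_{G_r}(n)\succcurlyeq n^r$. Fix a finite presentation of $G_r$ and an arbitrary van Kampen diagram $\mathcal D$ with boundary word $w_n$. The coordinate projection $\pi\colon G_r\to G_{r-1}$ forgetting the last coordinate is a Lipschitz retraction, split by the inclusion $(x_1,\dots,x_{r-1})\mapsto(x_1,\dots,x_{r-1},x_1)$, so $G_{r-1}$ is undistorted in $G_r$. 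One decomposes $\mathcal D$ transversally into $\asymp n$ slabs, indexed by the level of the last coordinate, pushes each slab forward by $\pi$, and tightens, obtaining in each slab a loop of length $\preccurlyeq n$ that bounds inside the retract $G_{r-1}$ and that, up to controlled modification, equals $w_n'$; hence each slab has area $\succcurlyeq\mathrm{Area}_{G_{r-1}}(w_n')$, and summing over slabs --- after checking that each $2$-cell of $\mathcal D$ is charged to only boundedly many of them --- gives the inductive inequality. The base case is the same procedure, now bottoming out in the classical quadratic lower bound for filling words $[u^n,s]$ that become trivial only modulo $\ker p$ (in the model case $Q=\ZZ^2$, $s=[a,b]$, this is the standard quadratic estimate for $[a^n,b^n]$); for $r=3$ this follows the pattern of Dison's cubic estimate.

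The hard part will be making the slicing robust against an \emph{arbitrary} finite presentation of $G_r$: one must show that the transversal decomposition and the retraction $\pi$ interact correctly with every $2$-cell --- that no cell spans more than $O(1)$ slabs, and that the push-forward of a slab genuinely reconstructs a loop that cannot be filled in $G_{r-1}$ more cheaply than $w_n'$ --- thereby ruling out diagrams that dodge the costly lower filling by routing mass through the forgotten coordinate. This is exactly where the structure of the fibre product is used: since $\Delta(F)\cap L_i=\{1\}$ and every relator is built from commutators of fibre elements with conjugates of fibre or diagonal elements, the accounting of how the area of $\mathcal D$ distributes among the slabs can be made exact. A subsidiary point, needed only in the base case, is that the chosen $Q$ must carry words $[u^n,s]$ with $\asymp n^2$ fillings while remaining not virtually abelian; this is guaranteed by the BHMS choice of $Q$.
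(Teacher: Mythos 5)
Your construction does not produce finitely presented groups, and this is fatal to the setup. The symmetric $r$-fold fibre product $G_r=\{(x_1,\dots,x_r)\mid p(x_1)=\dots=p(x_r)\}$ over an infinite quotient $Q$ is a full subdirect product of free groups whose projection to any pair of factors is $\{(x,y)\mid p(x)=p(y)\}$, a subgroup of index $|Q|=\infty$ in $F\times F$; so $G_r$ fails the VSP property, and by the converse direction of Theorem \ref{thmBHMS13} it cannot be finitely presented. This is precisely why the Bridson--Howie--Miller--Short groups $H_r$ are \emph{not} this naive fibre product: they are generated by normal generating sets of the fibres $\gamma_{r-1}(F_2^{(i)})$ together with \emph{twisted} diagonal elements such as $(a_1,a_2^2,\dots,a_r^r)$, and the twisting is exactly what forces virtual surjection onto pairs and hence finite presentability. (Your subsequent structural claims, e.g.\ that the defining relations are commutations of fibre generators with conjugates of diagonal elements, presuppose a finite presentation that does not exist for your $G_r$.)

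The lower-bound argument also has an unfilled gap at its core. The inductive slicing of an \emph{arbitrary} van Kampen diagram into $\asymp n$ slabs, each retracting onto a loop that costs $\succcurlyeq \mathrm{Area}_{G_{r-1}}(w_n')$ to fill, with each $2$-cell charged only $O(1)$ times, is exactly the hard content of such a proof, and you explicitly defer it; as stated it is a plan, not an argument (and even the base case leans on ``the pattern of Dison's cubic estimate'', itself a substantial separate proof). The paper avoids diagram combinatorics entirely: it writes $H_r$ as an \emph{asymmetric} fibre product of $F_2^{(1)}$ and $G_{2,r}=p_{2,\dots,r}(H_r)$ over the free nilpotent group $Q_r=F_2^{(1)}/\gamma_{r-1}(F_2^{(1)})$, and proves a purely algebraic area-transfer inequality (Proposition \ref{prop:areaDehn}, Corollary \ref{cor:DistFibProdFree}): for $h=(g,1)\in P\cap(G_1\times 1)$ represented by a word $v$, one has $\mathrm{Area}_{Q}(v)\leq |h|_P+\delta_{G_2}(|h|_P)+\delta_{G_1}(C|h|_P+|v|)$. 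Feeding in the known lower bound $\mathrm{Area}_{Q_r}([a_1^n,[a_1^n,\dots,[a_1^n,b_1^n]\dots]])\asymp n^{r-1}$ for free nilpotent groups (Gersten, Gersten--Holt--Riley) together with the undistortion of these elements in $H_r$ (Lemma \ref{lemUndist}, which uses VSP) immediately yields $\delta_{G_{2,r}}(n)\succcurlyeq n^{r-1}$. If you want to salvage your route, you would need both to replace your $G_r$ by the genuine BHMS groups and to either carry out the slicing in full or substitute an algebraic transfer argument of this kind.
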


The groups $G_r$ in Theorem \ref{thmMain} are explicitly described as quotients of the groups constructed by Bridson, Howie, Miller and Short in \cite[Section 4]{BriHowMilSho-13}, which themselves are conilpotent subgroups of direct products of free groups. As a direct consequence we obtain:
\begin{corollary}
\label{corMainThm}
The class of finitely presented residually free groups does not admit a uniform polynomial isoperimetric function.
\end{corollary}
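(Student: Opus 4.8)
The plan is to derive the corollary directly from Theorem~\ref{thmMain}, the only additional ingredient being the standard fact that a subgroup of a direct product of finitely many free groups is residually free. Concretely, if $G\leq F_2^{(1)}\times\dots\times F_2^{(r)}$ and $g\in G$ is nontrivial, then $\pi_i(g)\neq 1$ for some coordinate projection $\pi_i$, and $\pi_i|_G\colon G\to F_2^{(i)}$ is a homomorphism to the free group $F_2^{(i)}$ which does not kill $g$; hence $G$ is residually free. In particular every group $G_r$ furnished by Theorem~\ref{thmMain} is a finitely presented residually free group whose Dehn function satisfies $\delta_{G_r}(n)\succcurlyeq n^{r}$.

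Next I would argue by contradiction: suppose some polynomial $p$ satisfied $\delta_G(n)\preccurlyeq p(n)$ for every finitely presented residually free group $G$, and fix an integer $d\geq 1$ with $p(n)\preccurlyeq n^{d}$. Choosing an integer $r\geq 3$ with $r>d$, Theorem~\ref{thmMain} produces a finitely presented residually free group $G_r$ with $n^{r}\preccurlyeq\delta_{G_r}(n)\preccurlyeq p(n)\preccurlyeq n^{d}$; since $n^{r}\not\preccurlyeq n^{d}$ whenever $r>d$, this is a contradiction, which proves the corollary.

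As for obstacles, there is essentially none in this deduction itself: it is a one-line consequence of Theorem~\ref{thmMain}. The two points that should be recorded carefully, but which are classical, are that the $\preccurlyeq$-equivalence class of the Dehn function is independent of the chosen finite presentation (so that ``$\delta_G$'' is well defined for a finitely presented group $G$), and that $n^{a}\preccurlyeq n^{b}$ holds precisely when $a\leq b$, so that the family $\{n^{r}\}_{r\geq 3}$ admits no common polynomial upper bound. All the real work is concentrated in Theorem~\ref{thmMain} itself, namely in constructing the $G_r$ as quotients of the Bridson--Howie--Miller--Short conilpotent subgroups of $F_2^{(1)}\times\dots\times F_2^{(r)}$ and in establishing the lower bound $\delta_{G_r}(n)\succcurlyeq n^{r}$.
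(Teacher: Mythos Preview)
Your proposal is correct and is exactly the deduction the paper has in mind: the paper states Corollary~\ref{corMainThm} as a direct consequence of Theorem~\ref{thmMain} without further argument, and your write-up simply makes explicit the one observation needed (that subgroups of products of free groups are residually free) together with the obvious contradiction with a putative uniform polynomial bound.
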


Let us close this introduction with a few questions that arise naturally from this result.
We first recall the following non-uniform version of Dison's question:
\begin{question}\cite{Dis-08}
 Does every finitely presented subgroup of a product of free (or limit) groups admit a polynomial isoperimetric function?
\end{question}

We can break this question into the two following more specific ones:
\begin{question}
Does a finitely presented subgroup of a product of $r$ free groups have isoperimetric function $n^r$?
\end{question}

\begin{question}
 Is there a uniform isoperimetric function if we restrict to subdirect products of a fixed conilpotency class?
\end{question}
\newpage
\section{Background}

Let $G$ be a finitely presented group and let $\mathcal{P}=\left\langle X \mid R\right \rangle$ be a finite presentation. For a word $w(X)=x_1\dots x_n$, with $x_i\in X^{\pm 1}$, let $l(w(X))=n$ be its \textit{word length}, and for an element $g\in G$ let $|g|_G=\mathrm{dist}_{Cay(G,X)}(1,g)$ be its distance from the identity in the Cayley graph $Cay(G,X)$. We say that a word $w(X)$ is \textit{null-homotopic} if it represents the trivial element in $G$. The area of a null-homotopic word is
{\small\[
 \mathrm{Area}_{\mathcal{P}}(w(X))= \mathrm{min}\left\{ k \mid w(X)=_{Free(X)} \prod_{i=1}^k \theta_i(X) r_i \theta_i(X)^{-1}, r_i\in R^{\pm 1}, \theta_i(X) \mbox{ words in } X \right\}.
\]}

The Dehn function of $G$ (with respect to the presentation $\mathcal{P}$) is then defined as
\[
 \delta_G(n)=\mathrm{max}\left\{ \mathrm{Area}_{\mathcal{P}}(w(X)) \mid w(X) \mbox{ null-homotopic, }~ l(w(X))\leq n\right\}.
\]

For non-decreasing functions $f,g: \NN \to \RR_{\geq 0}$ we say that $f$ is \textit{asymptotically bounded} by $g$ and write $f\preccurlyeq g$ if there is a constant $C\geq 1$ such that $f(n) \leq C g(C n) +Cn +C$ for all $n\in \NN$. We say that $f$ is \emph{asymptotically equal} to $g$ and write $f\asymp g$ if $f\preccurlyeq g \preccurlyeq f$. Note that $\mathrm{Area}_{\mathcal{P}}$ and the Dehn function of $G$ are well-defined up to asymptotic equivalence, with respect to changes of presentation $\mathcal{P}$. This justifies the notations $\delta_G$ and $\mathrm{Area}_G$, which we will make frequent use of wherever this does not lead to confusion. We shall also use the notion $f\lesssim g$ if there exists a constant $C\geq 1$ such that $f\leq Cg$, and $ f\simeq g$ if both $f\lesssim g$ and $g\lesssim f$.  $|\cdot|_G$ is independent of the choice of finite presentation up to $\simeq$. We will frequently use this fact without explicit mention when working with $\lesssim$ and $\simeq$.

\begin{definition}
 For $r\geq 1$, $G_1\times \dots \times G_r$ a direct product of $r$ groups, $r\geq k \geq 1$,  and $1\leq i_1< \dots <i_k\leq r$, let $p_{i_1,\dots,i_k}: G_1\times \dots \times G_r \to G_{i_1}\times \dots \times G_{i_k}$ be the canonical projection. We say that a subgroup $H\leq G_1\times \dots \times G_r$ has the \textit{VSP property} (virtual surjection to pairs property) if $p_{i_1,i_2}(H)\leq G_{i_1}\times G_{i_2}$ is a finite index subgroup for all $1\leq i_1<i_2\leq r$.
 
 We further say that $H\leq G_1\times \dots \times G_r$ is \textit{subdirect} if $p_i(H)=G_i$ for all $1\leq i \leq r$ and \textit{full} if $H\cap G_i \left(:= H \cap(1\times \dots \times 1\times G_i\times 1 \times \dots\times 1)\right)\neq 1$ for all $1\leq i \leq r$.
\end{definition}

A group $G$ is called a \textit{limit group} (or \textit{fully residually free}) if for every finite subset $S\subset G$ there is a homomorphism $\phi: G\to F_2$ such that the restriction $\phi|_S$ is injective. We call $G$ \textit{residually free} if for every $g\in G\setminus \left\{1\right\}$ there is a homomorphism $\phi: G\to F_2$ with $\phi(g)\neq 1$. A finitely generated group $G$ is residually free if and only if $G$ admits an embedding in a finite product of limit groups \cite{BauMyaRem-99} (see also \cite{BriHowMilSho-13}), emphasizing the importance of studying subgroups of products of limit groups.  As mentioned in the introduction, SPFs are an important special class of residually free groups, and in this paper we will mostly restrict our attention to them. However, our results have implications for the class of general residually free groups in the form of Corollary \ref{corMainThm}, which is the motivation for defining them here.

For a group $G$ we denote by $\gamma_{c}(G)$ the $c$-th term of the lower central series of $G$, recalling that it is defined inductively by $\gamma_1(G)=G$ and $\gamma_{c+1}(G)=\left[G,\gamma_c(G)\right]$. A key result on finitely presented subgroups of direct products of limit groups is the following:
\begin{theorem}
 Let $H\leq G_1\times \dots \times G_r$ be a full subdirect product that satisfies the VSP property. Then $H$ is finitely presented and there are finite index subgroups $G_{i,0}\leq G_i$ such that $\gamma_{r-1}(G_{i,0})\leq H$. 
 
 Conversely, if $G_1,\dots, G_r$ are finitely generated limit groups and $H\leq G_1\times \dots \times G_r$ is a finitely presented full subdirect product then $H$ has the VSP property.
 \label{thmBHMS13}
\end{theorem}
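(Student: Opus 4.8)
Both implications are due to Bridson, Howie, Miller and Short \cite{BriHowMilSho-02, BriHowMilSho-13}, with $G_1,\dots,G_r$ finitely generated limit groups throughout. I would treat the lower central series containment first, as it is elementary and in fact valid for arbitrary groups $G_i$. Fix $i$. For every $j\neq i$ the VSP property makes $p_{i,j}(H)\cap(G_i\times 1)$ a finite-index subgroup of $G_i$, so intersecting over the $r-1$ indices $j\neq i$ produces a finite-index subgroup $G_{i,0}\leq G_i$ with the property that for each such $j$ and each $a\in G_{i,0}$ there is an element $s\in H$ with $p_i(s)=a$ and $p_j(s)=1$. Listing the indices $\neq i$ as $j_1,\dots,j_{r-1}$, choosing $a_1,\dots,a_{r-1}\in G_{i,0}$ with corresponding $s_1,\dots,s_{r-1}\in H$ as above, and forming the iterated commutator $c=[[\dots[s_1,s_2],\dots],s_{r-1}]\in H$, one sees that $c$ has trivial $G_{j_k}$-coordinate for every $k$ (since its $k$-th commutator entry does), hence $c\in H\cap G_i$, while the $G_i$-coordinate of $c$ equals $[[\dots[a_1,a_2],\dots],a_{r-1}]$. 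Since such commutators generate $\gamma_{r-1}(G_{i,0})$, this gives $\gamma_{r-1}(G_{i,0})\leq H\cap G_i\leq H$.

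For finite presentability I would induct on $r$. When $r=2$ the VSP property says precisely that $H$ has finite index in $G_1\times G_2$, so $H$ is finitely presented. For the inductive step, $D:=p_{1,\dots,r-1}(H)$ is again a full subdirect product with VSP, hence finitely presented by induction; moreover $H$ is the fibre product of $D$ and $G_r$ over the common quotient $Q:=G_r/(H\cap G_r)$, and by the containment just established $Q$ is virtually nilpotent of class $\leq r-2$, hence virtually polycyclic and of type $F_\infty$. I would then try to invoke an asymmetric $1$-$2$-$3$ theorem of \cite{BriHowMilSho-13} --- and this is where I expect the real difficulty to lie, since as soon as $r\geq 3$ the kernels $H\cap(G_1\times\dots\times G_{r-1})$ and $H\cap G_r$ contain copies of $\gamma_{r-1}$ of non-abelian free groups and so are \emph{not} finitely generated, meaning no off-the-shelf $1$-$2$-$3$ theorem applies to this particular fibre-product decomposition. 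The heart of the proof is thus to pass to a finite-index subgroup and realize it, using the structure of limit groups (their splittings over abelian subgroups, actions on $\RR$-trees, or the fact that they are CAT(0)), as an iterated fibre product over a virtually polycyclic group along which finite generation of the successive kernels can be arranged; I expect this to be the main obstacle.

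For the converse I would argue by contraposition: if some $p_{i,j}(H)$ had infinite index in $G_i\times G_j$, then projecting would produce a finitely presented full subdirect product of two limit groups of infinite index, and I would derive a contradiction from the rigidity of such subgroups --- namely, that a finitely presented full subdirect product of two limit groups must have finite index, by a Stallings--Bieri-type obstruction stemming from the strong finiteness properties of limit groups (in particular their being of type $F$). The delicate point is that $p_{i,j}(H)$ is a priori merely finitely generated, so the reduction to the two-factor case needs the structural machinery of \cite{BriHowMilSho-13} rather than being formal.
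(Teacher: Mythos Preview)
The paper's own proof is a one-line citation: the statement is assembled from Theorem~A, Theorem~D and Proposition~3.2 of \cite{BriHowMilSho-13}, with no argument reproduced. Your proposal is consistent with this---you also attribute everything to \cite{BriHowMilSho-13}---but you go further by sketching the content.

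Your direct argument for the containment $\gamma_{r-1}(G_{i,0})\leq H$ is correct and is essentially the proof of Proposition~3.2 in \cite{BriHowMilSho-13}; this part is indeed elementary and valid for arbitrary $G_i$. For finite presentability and for the converse you rightly flag the genuine obstacles (non-finitely-generated kernels blocking a naive 1--2--3 argument; the projection $p_{i,j}(H)$ being only finitely generated a priori) and defer to the structural machinery of \cite{BriHowMilSho-13}. That is an accurate assessment of where the work lies, and it matches what the paper is invoking via Theorems~A and~D. One minor correction of expectation: the actual route to finite presentability in \cite{BriHowMilSho-13} does not proceed by your proposed induction on $r$ through a single fibre-product decomposition over $Q=G_r/(H\cap G_r)$, precisely because of the kernel issue you identify; instead it passes through a more elaborate description of a finite-index subgroup of $H$. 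But since you explicitly anticipate this obstruction and hand the step back to \cite{BriHowMilSho-13}, there is no gap in your proposal as a proof-by-citation.
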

\begin{proof}
 This is a direct consequence of Theorem A, Theorem D and Proposition 3.2 in \cite{BriHowMilSho-13}. 
 \end{proof}

\section{Large distortion or large Dehn function}
\label{secDehnDist}
In this section we want to explain a generalisation of Proposition 4.2 in \cite{LloTes-18} to asymmetric fibre products in a product of non-hyperbolic groups.
Let $G_1$, $G_2$ be groups and let $\phi_1:G_1\to Q$ and $\phi_2: G_2\to Q$ be surjective homomorphisms. The (asymmetric) fibre product $P\leq G_1\times G_2$ is the subgroup defined by
$$P=\left\{\left(g_1,g_2\right)\in G_1\times G_2\mid \phi_1(g_1)=\phi_2(g_2)\right\}.$$

Assume that $G_1$ and $G_2$ are also finitely presented. We equip these groups with presentations $G_1 = \left\langle X_1\mid R_1 \right\rangle$, $G_2=\left\langle X_2\mid S_2 \right\rangle$ and  $Q=\left\langle X_Q \mid T_Q \right\rangle$ such that the families $X_1=\{a_1,b_1,\ldots\}$, $X_2=\{a_2,b_2,\ldots\}$ and $X_Q=\{a_Q,b_Q,\ldots\}$ are in bijection with a given family $X=\{a,b,\ldots\}$. We shall assume that these bijections induce morphisms
$j_1:F_X\to G_1$, $j_2:F_X\to G_2$, $j_Q:F_X\to Q$ satisfying $j_Q=\phi_1\circ j_1=\phi_2\circ j_2:F_X\to Q$. Using the identification of the generating families $X_1, X_2$ and $X_Q$ with $X$, we define (in the obvious way) the subsets $R,S$ and $T$ of $F_X$.  Note that since $Q$ is a quotient of both $G_1$ and $G_2$, we can  assume on increasing $T$ if necessary that it contains $R$ and $S$. Denote by $X_{\Delta}=\left\{(j_1(x),j_2(x)\mid x\in X\right\}\subset X_1\times X_2$.  Finally to every $t\in T$ we associate a letter $t'$, whose set is denoted by $T'$. Let  $T_1=\left\{\left(j_1(t),1\right)\mid t\in T\right\}$. 

It follows from the so-called asymmetric 0--1--2 Lemma that $X_{\Delta} \cup T_1$ is a finite generating set for $P$. This naturally defines $P$ as a quotient of the free group $F_{X\cup T'}$. While this lemma is well-known to experts, there does not seem to be a standard reference for it. We will thus include a proof here. We also remark that the symmetric version, where $G_1=G_2$ and $\phi_1=\phi_2$, can for instance be found in \cite[Lemma 2.1]{BriGru-04}. Note that the proof of the 0--1--2 Lemma only requires $G_1$ and $G_2$ to be finitely generated. We will thus deviate slightly from the above notation for the statement and proof of this result, in order to provide it in its full level of generality: we will assume that $Q=\left\langle X\mid T \right\rangle$ is finitely presented without requiring that $R$ and $S$ are contained in the finite set $T$ and define all other sets, groups and morphisms as above.

\begin{lemma}[{Asymmetric 0--1--2 Lemma}]
 Let $G_1=\left\langle X_1\right\rangle$ and $G_2=\left\langle X_2\right\rangle$ be finitely generated groups, let $Q=\left\langle X_Q \mid T_Q\right\rangle$ be finitely presented and let $\phi_i:G_i\to Q$ be epimorphisms. Then the fibre product $P\leq G_1\times G_2$ is generated by $X_{\Delta}\cup T_1$. In particular, $P$ is finitely generated.
\end{lemma}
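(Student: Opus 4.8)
The plan is to show directly that every element $(g_1,g_2) \in P$ can be written as a word in $X_\Delta \cup T_1$. First I would observe that $P$ contains $X_\Delta$ (since $\phi_1(j_1(x)) = j_Q(x) = \phi_2(j_2(x))$ for all $x \in X$, so each $(j_1(x),j_2(x))$ lies in $P$) and contains $T_1 = \{(j_1(t),1) \mid t \in T_Q\}$ (since $t$ represents the trivial element of $Q$, we have $\phi_1(j_1(t)) = 1 = \phi_2(1)$). So the subgroup $\langle X_\Delta \cup T_1 \rangle$ is genuinely contained in $P$, and it remains to prove the reverse inclusion.

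Given $(g_1,g_2) \in P$, write $g_1 = j_1(w)$ for some word $w = x_1 \cdots x_n$ in $X^{\pm 1}$ (possible since $X_1$ generates $G_1$). The key step — the ``level 0'' reduction — is to multiply $(g_1,g_2)$ on the left by $\big(j_1(w),j_2(w)\big)^{-1} \in \langle X_\Delta\rangle$, which yields an element of the form $(1,h)$ with $h = j_2(w)^{-1} g_2 \in G_2$. I then need to check that $(1,h) \in P$, i.e.\ that $\phi_2(h) = 1$: this is precisely where the hypothesis $(g_1,g_2)\in P$ is used, since $\phi_2(h) = \phi_2(j_2(w))^{-1}\phi_2(g_2) = j_Q(w)^{-1}\phi_1(g_1) = j_Q(w)^{-1}\phi_1(j_1(w)) = j_Q(w)^{-1} j_Q(w) = 1$. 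Thus the problem is reduced to showing that every element of $P$ of the form $(1,h)$ lies in $\langle X_\Delta \cup T_1\rangle$.

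For the ``level 1'' step, since $(1,h) \in P$ means $h \in \ker\phi_2$, and $\ker\phi_2$ is normally generated in $G_2$ by $\{j_2(t) \mid t \in T_Q\}$ (because $Q = \langle X_Q \mid T_Q\rangle$ and $\phi_2$ realizes $G_2 \twoheadrightarrow Q$ as a further quotient), I can write $h = \prod_{k=1}^m u_k\, j_2(t_k)^{\pm 1}\, u_k^{-1}$ in $G_2$, with $u_k = j_2(v_k)$ for words $v_k$ in $X^{\pm 1}$ and $t_k \in T_Q$. Then each factor $\big(1, u_k\, j_2(t_k)^{\pm 1}\, u_k^{-1}\big)$ equals the product $\big(j_1(v_k),u_k\big)\cdot\big(j_1(t_k),1\big)^{\mp 1}\cdot\big(j_1(v_k),u_k\big)^{-1}$, using the defining relation $j_1(t_k) = 1$ in $Q$ to see all these lie in $P$ — and each of the three pieces is visibly a word in $X_\Delta \cup T_1^{\pm 1}$. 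Multiplying these up gives $(1,h) \in \langle X_\Delta \cup T_1\rangle$, and combined with the level 0 step this shows $(g_1,g_2) \in \langle X_\Delta \cup T_1\rangle$, proving $P = \langle X_\Delta \cup T_1\rangle$; finiteness of $X_\Delta \cup T_1$ is immediate from finiteness of $X$ and $T_Q$.

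I do not expect a serious obstacle here; the only thing requiring genuine care is bookkeeping the identifications between $X$, $X_1$, $X_2$, $X_Q$ and keeping straight that $j_Q = \phi_1 \circ j_1 = \phi_2 \circ j_2$, so that the cancellations in the $Q$-coordinate go through. The conceptual content is entirely in the two reductions above (kill the first coordinate using the ``diagonal'' generators $X_\Delta$, then express the resulting kernel element using conjugates of the relator generators $T_1$), which is the standard mechanism behind all 0–1–2 type lemmas.
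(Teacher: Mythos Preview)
Your overall strategy is the same two–step reduction the paper uses, and it is the right idea. However, there is a concrete error in your level~1 computation. You reduce to an element $(1,h)$ with $h\in\ker\phi_2$, and then claim
\[
\bigl(1,\ u_k\, j_2(t_k)^{\pm 1}\, u_k^{-1}\bigr)
=\bigl(j_1(v_k),u_k\bigr)\cdot\bigl(j_1(t_k),1\bigr)^{\mp 1}\cdot\bigl(j_1(v_k),u_k\bigr)^{-1}.
\]
But the right-hand side has second coordinate $u_k\cdot 1\cdot u_k^{-1}=1$ and first coordinate $j_1(v_k)\,j_1(t_k)^{\mp 1}\,j_1(v_k)^{-1}$, which is generally nontrivial in $G_1$; so the two sides are not equal. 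The underlying issue is that $T_1$ consists of elements of the form $(j_1(t),1)$, living on the $G_1$ side, whereas you have reduced to an element supported on the $G_2$ side.

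There are two easy repairs. The first, which is what the paper does, is to kill the \emph{second} coordinate in the level~0 step: write $g_2=j_2(v)$ and multiply by the diagonal word for $v^{-1}$ to obtain $(h',1)$ with $h'\in\ker\phi_1$; then $h'$ is a product of $G_1$-conjugates of the $j_1(t)$, and the identity
\[
\bigl(j_1(v_k)\,j_1(t_k)^{\pm 1}\,j_1(v_k)^{-1},\,1\bigr)
=\bigl(j_1(v_k),j_2(v_k)\bigr)\cdot\bigl(j_1(t_k),1\bigr)^{\pm 1}\cdot\bigl(j_1(v_k),j_2(v_k)\bigr)^{-1}
\]
is genuinely correct. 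The second repair keeps your reduction to $(1,h)$ but inserts an extra diagonal factor: for each $t\in T_Q$ one has $\bigl(1,j_2(t)\bigr)=\bigl(j_1(t),1\bigr)^{-1}\cdot\bigl(j_1(t),j_2(t)\bigr)\in\langle X_\Delta\cup T_1\rangle$, and then conjugating by diagonal words handles the $u_k$. Either way the argument goes through; only your displayed identity needs to be replaced.
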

\begin{proof}
 By definition $X_{\Delta}\cup T_1\subset P$. We thus only need to check that it is a generating set. Let $g=\left(g_1,g_2\right)\in P$ and let $w(X_1,X_2)=u(X_1)\cdot v(X_2)$ be a word representing $g$ in $G_1\times G_2$.
 
 The word $w(X_1,X_2) \cdot v(X_{\Delta})^{-1}=\widetilde{w}(X_1)$ represents an element of the form $(h,1)$ in $G_1\times G_2$, which is contained in $P$. Thus $\phi_1(h)=1$, implying that $\widetilde{w}(X_Q)$ is freely equal to a product of conjugates of relations from $T_Q$ and their inverses by words in $X_Q$. Hence, $\widetilde{w}(X_1)$ satisfies 
 $$\widetilde{w}(X_1)=\prod_{j=1}^k (\theta_j(X_1),1) \cdot (j_1(t_j),1)^{\pm 1} \cdot (\theta_j(X_1)^{-1},1) = \prod_{j=1}^k \theta_j(X_{\Delta}) \cdot (j_1(t_j),1)^{\pm 1} \cdot \theta_j(X_{\Delta})^{-1} $$ 
 for words $\theta_j$ and relations $t_j\in T$. This completes the proof.
\end{proof}

From now on we will always assume that $G_1$ and $G_2$ are finitely presented and use the notation we introduced at the beginning of this section. The main result of this section is the following generalisation of \cite[Proposition 4.2]{LloTes-18}. We will combine this result with the existence of certain conilpotent subgroups of products of $r\geq 3$ free groups proved in \cite{BriHowMilSho-13} to deduce Theorem \ref{thmMain}.

\begin{proposition}\label{prop:areaDehn}
 Let $G_1=\left\langle X_1 \mid R_1\right\rangle$, $G_2=\left\langle X_2\mid S_2\right\rangle$, $Q=\left\langle X_Q\mid T_Q\right\rangle$ and $P=\left\langle X_{\Delta}\cup T_1\right\rangle$ be as above.  Let $h=(g,1)\in P\cap \left(G_1\times \left\{1\right\}\right)$. Let $v\in F_X$ be such that $g=j_1(v)$, and let $C=\max \{|t|_X;\; t\in T\}$. Then 
 \[\mathrm{Area}_Q(v)\leq |h|_{P}+\delta_{G_2}( |h|_{P})+\delta_{G_1}( C|h|_{P}+|v|_{F_X}).\]
\end{proposition}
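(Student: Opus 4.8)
The plan is to take the word $v \in F_X$, which by hypothesis maps under $j_1$ to $g$ with $(g,1) \in P$, and track how a geodesic word for $h$ in the generators $X_\Delta \cup T_1$ of $P$ gives control on $\mathrm{Area}_Q(v)$. Write $n = |h|_P$ and let $W$ be a geodesic word of length $n$ in the letters $X_\Delta^{\pm 1} \cup T_1^{\pm 1}$ representing $h = (g,1)$ in $P \leq G_1 \times G_2$. Projecting $W$ to the first coordinate gives a word $W_1$ in $X_1^{\pm 1}$ (the $T_1$-letters contribute $j_1(t)$) and projecting to the second coordinate gives a word $W_2$ in $X_2^{\pm 1}$ (the $T_1$-letters contribute $1$, so $W_2$ only involves the $X_\Delta$-letters read in $X_2$). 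Both $W_1$ and $W_2$ have length at most $Cn$ and $n$ respectively, using $|j_1(t)|_{X_1} \le |t|_X \le C$; and $W_2$ represents the trivial element of $G_2$ while $W_1$ represents $g$ in $G_1$.

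Next I would pass everything to $F_X$ and then to $Q$. Let $\bar W$ be the word in $X^{\pm 1} \cup (T')^{\pm 1}$ obtained from $W$ by replacing each $X_\Delta$-letter $(j_1(x), j_2(x))$ by $x$ and each $T_1$-letter $(j_1(t),1)$ by the formal letter $t'$. Reading $\bar W$ with $t' \mapsto j_1(t)$ recovers (a word freely equal to) $W_1$, hence represents $g = j_1(v)$ in $G_1$; reading $\bar W$ with $t' \mapsto 1$ recovers $W_2$, which is trivial in $G_2$, and in particular trivial in $Q$ since $Q$ is a quotient of $G_2$. The key point is now: in $F_X$, the word obtained from $\bar W$ by deleting all $t'$-letters — call it $W_2'$, which is literally $W_2$ read in $X$ — differs from $v$ only by an element of the kernel of $j_Q: F_X \to Q$, because under $j_Q$ both $W_2'$ and $v$ map to $\phi_1(g) = \phi_2(1) = 1$... more precisely, $j_Q(v) = \phi_1(j_1(v)) = \phi_1(g)$, and since $(g,1) \in P$ we have $\phi_1(g) = \phi_2(1) = 1$, so $v$ is null-homotopic in $Q$ to begin with, and I want to bound $\mathrm{Area}_Q(v)$ by producing an explicit factorization.

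So the combinatorial heart is: express $v \in F_X$ as a product of at most $|h|_P + \delta_{G_2}(|h|_P) + \delta_{G_1}(C|h|_P + |v|_{F_X})$ conjugates of relators from $T_Q$. I would do this in three installments. First, $v^{-1} \cdot W_2'$ (as words in $X$): the difference between $W_1$ and $W_2'$ comes exactly from the $n$ inserted $t'$-letters, each of which is a relator of $Q$ (recall $T \supseteq R \cup S$ and $t' \leftrightarrow t \in T \subseteq T_Q$), contributing the $|h|_P$ term after conjugating. Second, $W_2'$ is null-homotopic in $G_2$ as a word of length $\le |h|_P$, hence equals a product of $\le \delta_{G_2}(|h|_P)$ conjugates of $S_2$-relators, which are $T_Q$-relators; this is the $\delta_{G_2}(|h|_P)$ term. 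Third — and this is where the $G_1$ term enters — one must account for the passage from "$v$ and $W_1$ represent the same element $g$ of $G_1$" to an equality in $F_X$: the word $v^{-1} W_1$ is null-homotopic in $G_1$, of length $\le |v|_{F_X} + C|h|_P$, hence a product of $\le \delta_{G_1}(C|h|_P + |v|_{F_X})$ conjugates of $R_1$-relators, again $T_Q$-relators. Assembling these three factorizations (and using that free equalities in $F_X$ cost nothing) writes $v$ as the claimed number of conjugates of $T_Q$-relators in $F_X$, hence $\mathrm{Area}_Q(v)$ is bounded as stated.

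The main obstacle I anticipate is bookkeeping the free-group identities carefully enough so that the three pieces genuinely compose into a single factorization of $v$ over $T_Q$ without hidden extra cost: one has to be precise that the van Kampen-type equalities are equalities in $F_X$ (or $F_{X \cup T'}$), that the relators invoked for $G_1$, $G_2$, and the $t'$-letters are all available inside $T_Q$ (which is why the setup stipulates $R, S \subseteq T$ and the compatible generating sets), and that the conjugating words, while possibly long, do not affect the \emph{count} of relators — only their number matters for area. Once the symbolic manipulation is organized as "$v \simeq v^{-1}W_1 \cdot W_1$, then $W_1 \simeq (\text{insert } t'\text{'s}) \cdot W_2'$, then resolve $W_2'$ in $G_2$," the three error terms fall out in exactly the stated form.
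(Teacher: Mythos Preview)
Your approach is correct and essentially identical to the paper's: your $\bar W$, $W_1$, and $W_2'$ are exactly the paper's $w(X,T')$, $\pi(w)$, and $w(X,1)$, and your three installments are the paper's bounds $l$, $q$, and $k$ in the same order. The only blemishes are expository slips (the dangling ``$v^{-1}\cdot W_2'$'' in your first item and ``$v \simeq v^{-1}W_1 \cdot W_1$'' where you mean $v = (vW_1^{-1})\,W_1$), but the argument is sound.
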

\begin{proof}[Proof of Proposition \ref{prop:areaDehn}]
We let $w=w(X,T') \in F_{X\cup T'}$ be a reduced word of length $|h|_P$ such that $h=w(X_{\Delta},T_1)$, where $w(X_{\Delta},T_1)$ means the element of $P$ obtained by substituting letters of $w$ in $X$ and $T'$ by the corresponding elements in $X_{\Delta}$ and $T_1$. 
\begin{clai}\label{clai:w(X,1)}
The element $w(X,1)\in F_X$ is a product of conjugates of elements of $S$ whose number $k$ is at most $\delta_{G_2}(|h|_P)$.
\end{clai}
\begin{proof}
Projecting $w$ in $P$, and using the product structure in $G_1\times G_2,$ we obtain
\[w(X_{\Delta},T_1)=w(X_1,T_1)w(X_2,1).\] So by projecting to $G_2$, we deduce that $w(X_2,1)$ maps to the trivial element in $G_2$, so we are done.
\end{proof}

Let $q$ be the number of letters from $T'$ in $w$, and let $\pi: F_{X\cup T'}\to F_{X}$ be the group morphism mapping $T'$ to $T$. 
\begin{clai}\label{clai:k+pT}
The word $\pi(w)$ is freely equal to a product of $q+k$ conjugates of elements of $T$, and thus $ \mathrm{Area}_Q(\pi(w))\leq q+k$.   
\end{clai}
\begin{proof}
By moving all letters of $T'$ to the right, we can write $w(X,T')=w(X,1)w'$ in $F_{X\cup T'}$, where $w'$ is a product of $q$ conjugates of letters in $T'$. On the other hand by Claim \ref{clai:w(X,1)}, $w(X,1)$ can be written as a product of $k$ conjugates of elements of $S\subset T$. \end{proof}
We now consider the element $v\pi(w)^{-1}\in F_X$, which by construction is mapped to the neutral element of $G_1$. Note that its length is $\leq C|h|_P+|v|_{F_X}$, so that we can write it in $F_X$ as a product of $l\leq \delta_{G_1}(C|h|_P+|v|_{F_X})$ conjugates of elements of $R$.
Finally, writing $v=(v\pi(w)^{-1})\pi(w)$ we observe that 
\[ \mathrm{Area}_Q(v)\leq  \mathrm{Area}_Q(\pi(w))+l,\] which combined with Claim \ref{clai:k+pT} implies that
\[ \mathrm{Area}_Q(v)\leq q+k+l.\]
Therefore, 
\[\mathrm{Area}_Q(v)\leq q+\delta_{G_2}(|h|_P)+ \delta_{G_1}(C|h|_P+|v|_{F_X}),\]
and since $q\leq |h|_P$ we are done.
\end{proof}

\begin{corollary}
Let $G_1$, $G_2$ and $Q$ be finitely presented groups, and for $i=1,2$, let $\phi_i:G_i\to Q$ be surjective morphisms. Let $X$ be a finite alphabet and $j_1:F_X\to G_1$ be a surjective morphism. Let $h_n=(g_{n},1)\in P\cap \left(G_1\times 1\right)$, and $v_n\in F_X$ be such that $j_1(v_n)=g_n$. Assume that there is a constant $K \geq 1$ such that
\begin{enumerate}
\item $\frac{1}{K} n\leq |g_{n}|_{G_1},  |v_n|_{F_{X}} \leq K n;$
\item $\mathrm{Area}_Q(v_n)\gtrsim \delta_Q(n/K)$.
\end{enumerate}
Then there is $C\geq 1$ such that $$\delta_Q(n/K)\lesssim \delta_{G_1}(C |h_n|_P)+\delta_{G_2}(C |h_n|_P)+|h_n|_P.$$ 
 \label{cor:DistFibProdImp}
\end{corollary}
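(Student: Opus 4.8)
The plan is to derive Corollary \ref{cor:DistFibProdImp} directly from Proposition \ref{prop:areaDehn} by feeding it the sequence $h_n = (g_n,1)$ and tracking the asymptotics of each term. First I would apply Proposition \ref{prop:areaDehn} with $h = h_n$ and $v = v_n$, which (with $C_0 = \max\{|t|_X : t \in T\}$ fixed once a finite presentation of $Q$ is chosen) gives
\[
\mathrm{Area}_Q(v_n) \leq |h_n|_P + \delta_{G_2}(|h_n|_P) + \delta_{G_1}(C_0|h_n|_P + |v_n|_{F_X}).
\]
The left-hand side is, by hypothesis (2), bounded below by a constant multiple of $\delta_Q(n/K)$, so it suffices to absorb the three right-hand terms into an expression of the claimed form. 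The first term is already $|h_n|_P$. For the $\delta_{G_1}$ term, I would use hypothesis (1) to get $|v_n|_{F_X} \leq Kn$, and the trivial distortion bound $|g_n|_{G_1} \leq |h_n|_P$ (projection to $G_1$ is $1$-Lipschitz on $P$) together with $\frac1K n \leq |g_n|_{G_1}$ to conclude $n \leq K|h_n|_P$, hence $|v_n|_{F_X} \leq Kn \leq K^2|h_n|_P$. Therefore $C_0|h_n|_P + |v_n|_{F_X} \leq (C_0 + K^2)|h_n|_P$, and since $\delta_{G_1}$ is non-decreasing, $\delta_{G_1}(C_0|h_n|_P + |v_n|_{F_X}) \leq \delta_{G_1}((C_0+K^2)|h_n|_P)$. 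The $\delta_{G_2}$ term is already of the right shape. Setting $C = C_0 + K^2$ and using monotonicity to bump the argument of $\delta_{G_2}$ up to $C|h_n|_P$ as well yields
\[
\delta_Q(n/K) \lesssim \mathrm{Area}_Q(v_n) \leq |h_n|_P + \delta_{G_1}(C|h_n|_P) + \delta_{G_2}(C|h_n|_P),
\]
which is exactly the assertion.

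The only genuinely substantive input beyond bookkeeping is the inequality $|g_n|_{G_1} \leq |h_n|_P$, i.e.\ that the coordinate projection $p_1 : P \to G_1$ does not increase word length; this is immediate because $p_1$ sends the generating set $X_\Delta \cup T_1$ into $X_1 \cup \{1\}$, so it maps words of length $\ell$ in $P$ to words of length at most $\ell$ in $G_1$. I would also remark that all the $\lesssim$ and $\simeq$ relations here are with respect to fixed finite presentations, which is harmless since word length and Dehn functions are well-defined up to $\simeq$ and $\asymp$ respectively, and hypotheses (1)--(2) are phrased in a presentation-robust way. I do not expect any real obstacle: the proof is a short chain of monotonicity estimates, and the main point is simply to choose the constant $C$ after collecting the contributions of $C_0$ and $K$.
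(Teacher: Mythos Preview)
Your proof is correct and follows essentially the same approach as the paper's: apply Proposition~\ref{prop:areaDehn}, use the projection $P\to G_1$ to convert the bound on $|v_n|_{F_X}$ into one in terms of $|h_n|_P$, and invoke hypothesis~(2). One small inaccuracy: $p_1$ does \emph{not} send $T_1$ into $\{1\}$ but rather to $\{j_1(t):t\in T\}\subset G_1$, so the projection is only $C_0$-Lipschitz, not $1$-Lipschitz; this changes your constant to $C=C_0(1+K^2)$ but does not affect the argument (and is why the paper writes $|g_n|_{G_1}\lesssim |h_n|_P$ rather than $\leq$).
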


\begin{proof} 
Since $P$ projects to $G_1$, we deduce that
\begin{equation}
 |g_{n}|_{G_1}\lesssim |h_n|_P.
 \label{eqnCor1}
\end{equation}
Combining \eqref{eqnCor1} with Assumption (1), we deduce
\begin{equation}
|v_n|_{F_X}\lesssim |h_n|_P.
\label{eqnCor2}
\end{equation}
Applying Proposition \ref{prop:areaDehn} and \eqref{eqnCor2} to $h_n$ and $v_n=v_n(X)$ we obtain that there is a constant $C\geq 1$ such that 
\begin{equation}
\mathrm{Area}_Q(v_n)\leq |h_n|_P + \delta_{G_2}(C|h_n|_P) + \delta_{G_1}(C|h_n|_P).
\label{eqnCor3}
\end{equation}
Since by Assumption (2) $\mathrm{Area}_Q(v_n)\gtrsim \delta_Q(n/K)$, this completes the proof.
\end{proof}

\begin{remark}
Note that linearity of Dehn functions of hyperbolic groups allows us to simplify the conclusion of Corollary \ref{cor:DistFibProdImp} to $\delta_{G_2}(C |h_n|_P)+|h_n|_P\gtrsim \delta_Q(n/K)$ if $G_1$ is hyperbolic.
\label{rmkDistFibProdImp}
\end{remark}

We shall need the following special case of Corollary \ref{cor:DistFibProdImp}, where we assume that $G_1$ is free.

\begin{corollary}\label{cor:DistFibProdFree}
Let $G_1$, $G_2$ and $Q$ be finitely presented groups, and, for $i=1,2$, let $\phi_i:G_i\to Q$ be surjective morphisms. Let $X=X'\sqcup X''$ be a finite alphabet, assume that $G_1=F_{X'}$, and let $j_1$ be the natural projection from $F_X\to F_{X'}$ defined by mapping $X'$ to $X'$ by the identity map and $X''$ to $1$. Let $h_n=(g_{n},1)\in P\cap \left(G_1\times 1\right)$, and $v_n(X')\in F_{X}$ be such that $j_1(v_n)=g_n$. Assume that  there is a constant $K \geq 1$ such that
\begin{enumerate}
\item  $\frac{1}{K} n\leq |v_n|_{F_{X}} \leq K n;$
\item $\mathrm{Area}_Q(v_n)\gtrsim \delta_Q(n/K)$.
\end{enumerate}
Then there is $C\geq 1$ such that $$\delta_Q(n/K)\lesssim \delta_{G_2}(C |h_n|_P)+|h_n|_P.$$ 
\end{corollary}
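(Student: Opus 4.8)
The plan is to deduce the statement from Corollary \ref{cor:DistFibProdImp} (equivalently, from Remark \ref{rmkDistFibProdImp}); the only point to verify is that the seemingly weaker hypothesis (1) above already implies hypothesis (1) of that corollary. The key observation is that $v_n$ is a word in the sub-alphabet $X'$ and that $j_1$ restricts to the identity on $F_{X'}\le F_X$. Hence $v_n$, viewed as an element of $F_X$, lies in the subgroup $F_{X'}$ and is fixed by $j_1$, so $g_n=v_n$ in $F_{X'}$ and therefore $|g_n|_{G_1}=|g_n|_{F_{X'}}=|v_n|_{F_X}$. Thus hypothesis (1) here is literally hypothesis (1) of Corollary \ref{cor:DistFibProdImp} with the same constant $K$, and hypothesis (2) coincides with hypothesis (2) there.

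Applying Corollary \ref{cor:DistFibProdImp} then yields a constant $C\ge 1$ with
\[\delta_Q(n/K)\lesssim \delta_{G_1}(C|h_n|_P)+\delta_{G_2}(C|h_n|_P)+|h_n|_P.\]
Since $G_1=F_{X'}$ is free, hence hyperbolic, its Dehn function is linear, so $\delta_{G_1}(C|h_n|_P)\lesssim|h_n|_P$; absorbing this term into the right-hand side gives the asserted bound. This is precisely the simplification recorded in Remark \ref{rmkDistFibProdImp}, which one could also invoke directly.

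I do not expect a genuine obstacle: the entire content is the identification $|v_n|_{F_X}=|g_n|_{G_1}$, which holds because $v_n$ is a word in $X'$ only and $j_1$ is the identity on $F_{X'}$; once this is noted, Corollary \ref{cor:DistFibProdImp} applies verbatim. If one prefers to bypass that corollary, the same proof runs directly from Proposition \ref{prop:areaDehn}: present $G_1=F_{X'}$ as $\langle X\mid X''\rangle$, so that $j_1$ becomes its presentation map with $R=X''$; then $\delta_{G_1}(m)\le m$ for this presentation, since each occurrence of a letter of $X''$ in a null-homotopic word can be conjugated off at unit cost, whence the term $\delta_{G_1}(C|h_n|_P+|v_n|_{F_X})$ in Proposition \ref{prop:areaDehn} is $\lesssim|h_n|_P+|v_n|_{F_X}\lesssim|h_n|_P$ (using $|v_n|_{F_X}=|g_n|_{G_1}\lesssim|h_n|_P$, as $P$ projects onto $G_1$), and combining with hypothesis (2) completes the argument.
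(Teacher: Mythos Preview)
Your proposal is correct and matches the paper's approach exactly: the paper simply introduces this corollary as ``the following special case of Corollary \ref{cor:DistFibProdImp}, where we assume that $G_1$ is free'' and gives no separate proof, so your argument is just the spelled-out version of that one-line justification (together with Remark \ref{rmkDistFibProdImp}). The only additional observation needed---that $v_n\in F_{X'}\le F_X$ so $|g_n|_{G_1}=|v_n|_{F_X}$---is precisely the point you identify.
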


\section{Proof of Theorem  \ref{thmMain}}

Bridson, Howie, Miller and Short constructed the first examples of subgroups of direct products of free groups which are conilpotent, but not virtually coabelian, that is, do not have a finite index subgroup which is isomorphic to the kernel of a homomorphism from a direct product of free groups to a free abelian group \cite{BriHowMilSho-13}. The class of examples in their work is of rather general nature, but for simplicity we shall restrict ourselves to a specific subfamily of examples that will suffice for our purposes. However, our arguments directly generalize to all of their examples.

For $r\geq 3$ let $F_2^{(1)}, \dots, F_2^{(r)}$ be 2-generated free groups with generating sets $F_2^{(i)}={\mathrm{Free}}(\left\{a_i,b_i\right\})$. Choose finite normal generating sets $Y_i=Y_i(a_i,b_i)$ of the $(r-1)$-th term of the lower central series $\left\langle \left\langle Y_i \right\rangle \right \rangle =\gamma_{r-1}(F_2^{(i)})\unlhd F_2^{(i)}$ and define elements
\[
\begin{array}{ll}
 z_{1,r}=\left(a_1,a_2,\dots,a_r\right), & \hspace{.1cm} z_{2,r}=\left(b_1,b_2,\dots,b_r\right),\\
 z_{3,r}=\left(a_1,a_2^2,\dots,a_r^r\right), & \hspace{.1cm} z_{4,r}=\left(b_1,b_2^2,\dots,b_r^r\right).\\
\end{array}
\]
Denote $Z_r=\left\{z_{1,r},z_{2,r},z_{3,r},z_{4,r}\right\}$ and define the finitely generated subgroup
\[
H_r = \left\langle X_r \right\rangle\leq F_2^{(1)}\times \dots \times F_2^{(r)},
\]
generated by $X_r= Y_1\cup \dots \cup Y_r \cup Z_r$.

Then $H_r$ has the following properties:
\begin{theorem}[{\cite[Section 4]{BriHowMilSho-13}} ]
The group $H_r$ is a finitely presented full subdirect product and $H_r \cap F_2^{(i)}= \gamma_{r-1}(F_2^{(i)})$ for $1\leq i \leq r$.
\end{theorem}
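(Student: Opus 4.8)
The plan is to verify the three asserted properties of $H_r$ by reducing each to the general theory of Theorem \ref{thmBHMS13} together with explicit computations with the generators $Z_r$. Concretely, I would first check that $H_r$ is \emph{full subdirect} with $H_r\cap F_2^{(i)}=\gamma_{r-1}(F_2^{(i)})$, and then use Theorem \ref{thmBHMS13} (the converse direction) to conclude finite presentability once subdirectness and the VSP property are in place. Since the difficult quantitative content of the paper lies elsewhere (Proposition \ref{prop:areaDehn} and its corollaries), this statement should be essentially a bookkeeping argument about the BHMS construction.

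\textbf{Subdirectness and the lower central series intersection.} For each $i$, the projection $p_i(H_r)$ contains $p_i(z_{1,r})=a_i$ and $p_i(z_{2,r})=b_i$, hence $p_i(H_r)=F_2^{(i)}$, so $H_r$ is subdirect. For the intersection, note first that $Y_i\subset\gamma_{r-1}(F_2^{(i)})\cap H_r$ by construction, and since the $Y_i$ normally generate $\gamma_{r-1}(F_2^{(i)})$ in $F_2^{(i)}$ and $H_r$ surjects onto $F_2^{(i)}$, conjugating the elements of $Y_i\subset H_r\cap F_2^{(i)}$ (which is normal in $H_r$ because $F_2^{(i)}\lhd F_2^{(1)}\times\cdots\times F_2^{(r)}$) by preimages of arbitrary elements of $F_2^{(i)}$ shows $\gamma_{r-1}(F_2^{(i)})\subseteq H_r\cap F_2^{(i)}$. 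The reverse inclusion $H_r\cap F_2^{(i)}\subseteq\gamma_{r-1}(F_2^{(i)})$ is where the choice of the $z_{j,r}$ matters: one must see that no element of $H_r$ supported on the $i$-th coordinate can lie outside $\gamma_{r-1}(F_2^{(i)})$. I would argue this by passing to the nilpotent quotients $F_2^{(i)}/\gamma_{r-1}(F_2^{(i)})$: the diagonal-type elements $z_{1,r},\dots,z_{4,r}$ project to a generating configuration of the full subdirect product $\prod_i F_2^{(i)}/\gamma_{r-1}(F_2^{(i)})$ in which the coordinates are ``independent enough'' (the scaling $a_i\mapsto a_i^i$, $b_i\mapsto b_i^i$ is precisely what BHMS use to guarantee that the only relations forced among the coordinates are the ones coming from nilpotency), so that $H_r$ maps onto $\prod_i F_2^{(i)}/\gamma_{r-1}(F_2^{(i)})$ and its kernel is exactly $\prod_i\gamma_{r-1}(F_2^{(i)})$; intersecting with the $i$-th factor then gives the claim. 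This is the heart of \cite[Section 4]{BriHowMilSho-13} and I would cite it rather than reprove it.

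\textbf{Finite presentability.} Having established that $H_r$ is a full subdirect product of the finitely generated limit (indeed free) groups $F_2^{(1)},\dots,F_2^{(r)}$, I would like to invoke Theorem \ref{thmBHMS13} to get finite presentability. The cleanest route is to verify the VSP property directly: for $1\le i_1<i_2\le r$ the image $p_{i_1,i_2}(H_r)$ contains $p_{i_1,i_2}(z_{1,r})=(a_{i_1},a_{i_2})$, $p_{i_1,i_2}(z_{2,r})=(b_{i_1},b_{i_2})$, $p_{i_1,i_2}(z_{3,r})=(a_{i_1}^{i_1},a_{i_2}^{i_2})$, $p_{i_1,i_2}(z_{4,r})=(b_{i_1}^{i_1},b_{i_2}^{i_2})$, as well as $Y_{i_1}$ and $Y_{i_2}$; I would check that this subgroup of $F_2^{(i_1)}\times F_2^{(i_2)}$ has finite index, e.g. by showing it contains $\gamma_2(F_2^{(i_1)})\times\gamma_2(F_2^{(i_2)})$ (using commutators of the $z_{j,r}$ and the $Y$'s) and surjects onto the finite-index subgroup of $\ZZ^2\times\ZZ^2$ spanned by the images of the four $z_{j,r}$'s, whose index is controlled by the determinant-type quantity built from the exponents $1,i_1$ and $1,i_2$. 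Once VSP holds, the first part of Theorem \ref{thmBHMS13} gives that $H_r$ is finitely presented (and independently re-derives $\gamma_{r-1}(F_2^{(i),0})\le H_r$ for suitable finite-index subgroups, consistent with the intersection statement above).

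\textbf{Main obstacle.} The genuinely non-formal point is the reverse inclusion $H_r\cap F_2^{(i)}\subseteq\gamma_{r-1}(F_2^{(i)})$, i.e. showing that the four elements $z_{1,r},\dots,z_{4,r}$ are ``in general position modulo nilpotency'' so that $H_r$ does not accidentally contain extra elements of a single free factor; the scaled powers in $z_{3,r},z_{4,r}$ are exactly the device that prevents this, and a careful proof must track how these exponents interact across all $r$ coordinates in the free nilpotent quotients. Since this is established in \cite[Section 4]{BriHowMilSho-13} for the general family, I would state the proof as a direct appeal to that reference, noting only that our $H_r$ is the stated special case, and spend no further effort reproving it here.
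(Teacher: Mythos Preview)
The paper gives no proof of this statement whatsoever: it is stated as a theorem with the citation \cite[Section 4]{BriHowMilSho-13} and nothing more. Your proposal to ultimately ``state the proof as a direct appeal to that reference'' is therefore exactly what the paper does; the surrounding sketch you provide is already more than the authors offer.

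That said, one step of your sketch is imprecise. In the VSP verification you propose to show that $p_{i_1,i_2}(H_r)$ contains $\gamma_2(F_2^{(i_1)})\times\gamma_2(F_2^{(i_2)})$. This is not what the generators give you: the $Y_i$ only normally generate $\gamma_{r-1}(F_2^{(i)})$, not $\gamma_2$, and the normal closure of $a^{m},b^{m}$ (with $m=i_2-i_1$) in $F_2$ has infinite index (the quotient is $\ZZ/m * \ZZ/m$). The correct route is to pass to the nilpotent quotient $N\times N$ with $N=F_2/\gamma_{r-1}(F_2)$: there the image of $p_{i_1,i_2}(H_r)$ contains the diagonal together with $(1,\bar a^{m}),(1,\bar b^{m})$, whose normal closure in $1\times N$ has finite index because a finitely generated nilpotent group in which every generator has finite order is finite. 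Combined with $\gamma_{r-1}\times\gamma_{r-1}\le p_{i_1,i_2}(H_r)$, this yields finite index. With that correction your outline is sound, and your identification of the reverse inclusion $H_r\cap F_2^{(i)}\subseteq\gamma_{r-1}(F_2^{(i)})$ as the genuinely nontrivial input from \cite{BriHowMilSho-13} is accurate.
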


As a consequence the group $H_r$ is a fibre product of the group $F_2^{(1)}=p_1(H_r)$ and the projection $G_{2,r}=p_{2,\dots,r}(H_r)\leq F_2^{(2)}\times \dots \times F_2^{(r)}$ over the 2-generated free nilpotent group $Q_r=F_2^{(1)}/\gamma_{r-1}(F_2^{(1)})$ of class $r-2$. Denote by $\phi_1 :F_2^{(1)} \to Q_r$ and $\phi_2 : G_{2,r} \to Q_r$ the projections defining $H_r$ as a fibre product. Note that we have $\delta_{F_2^{(1)}}(n)\asymp n$ and $\delta_{Q_r}(n)\asymp n^{r-1}$ \cite{BauMilSho-93, Ger-93}. 
The group $G_{2,r}$ is finitely presented by Theorem \ref{thmBHMS13}, since $H_r$ has the VSP property and therefore the same holds for $G_{2,r}$.

We are now ready to state our main result, whose proof will occupy the rest of this section (observe that Theorem \ref{thmMain} is an immediate consequence of Theorem \ref{thmMainExplicit}).
 
\begin{theorem}
 For $r\geq 3$, the Dehn function of the group $G_{2,r}=p_{2,\dots,r}(H_r)\leq F_2^{(2)}\times \dots \times F_2^{(r)}$ satisfies $\delta_{G_{2,r}}(n) \succcurlyeq n^{r-1}$. 
 \label{thmMainExplicit}
\end{theorem}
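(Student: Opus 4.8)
The plan is to apply Corollary \ref{cor:DistFibProdFree} with $G_1 = F_2^{(1)}$, $G_2 = G_{2,r}$, and $Q = Q_r = F_2^{(1)}/\gamma_{r-1}(F_2^{(1)})$, the free nilpotent group of class $r-2$, whose Dehn function satisfies $\delta_{Q_r}(n) \asymp n^{r-1}$. Since $F_2^{(1)}$ is free (hence hyperbolic), the corollary will reduce us to exhibiting, for each $n$, an element $h_n = (g_n, 1) \in H_r \cap (F_2^{(1)} \times 1) = \gamma_{r-1}(F_2^{(1)}) \times 1$, together with a word $v_n$ in the free group $F_2^{(1)}$ representing $g_n$, such that (1) the word length $|v_n|$ is comparable to $n$, and (2) the area of $v_n$ in $Q_r$ is comparable to $\delta_{Q_r}(n) \asymp n^{r-1}$. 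Granting these, the conclusion $\delta_{G_{2,r}}(n/K) \lesssim \delta_{G_{2,r}}(C|h_n|_{H_r}) + |h_n|_{H_r}$ — note $H_r = P$ here — combined with a lower bound on $|h_n|_{H_r}$ of the form $|h_n|_{H_r} \lesssim n$ will yield $\delta_{G_{2,r}}(n) \succcurlyeq n^{r-1}$.

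**The first step** is to produce the family $v_n$. The standard way to certify that the Dehn function of the free nilpotent group $Q_r$ of class $c = r-2$ is at least $n^{c+1} = n^{r-1}$ is via iterated commutators: take the length-$n$ word $w_n = [\,[\cdots[[x^n, y^n], y^n], \cdots]\,]$ (or the more carefully balanced version due to Baumslag--Miller--Short / Gersten) which is null-homotopic in $Q_r$ but whose area grows like $n^{r-1}$, the lower bound typically proved by a retraction/abelianization argument onto a quotient that detects the obstruction. I would set $v_n$ to be such a word, rescaled so that $|v_n|_{F_X} \asymp n$; since $v_n$ represents an element of $\gamma_{r-1}(F_2^{(1)})$ (it is a product of $(r-1)$-fold commutators), its image $g_n = j_1(v_n)$ lies in $\gamma_{r-1}(F_2^{(1)})$, so $h_n = (g_n, 1) \in H_r \cap (F_2^{(1)} \times 1)$ as required. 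Hypothesis (2) of Corollary \ref{cor:DistFibProdFree}, $\mathrm{Area}_{Q_r}(v_n) \gtrsim \delta_{Q_r}(n/K) \asymp n^{r-1}$, is then exactly the lower bound on the Dehn function of $Q_r$ realized on this explicit family; hypothesis (1) is the (easy) length estimate.

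**The remaining step**, and the one I expect to be the main obstacle, is bounding $|h_n|_{H_r}$ linearly in $n$, i.e. showing the element $h_n$ can be written as a product of $\lesssim n$ generators from $X_r = Y_1 \cup \cdots \cup Y_r \cup Z_r$. The projection $H_r \to F_2^{(1)}$ only gives $|g_n|_{F_2^{(1)}} \lesssim |h_n|_{H_r}$, which is the wrong direction; we need the reverse. The point is that $h_n$ is supported in the first coordinate and lies in $\gamma_{r-1}(F_2^{(1)})$, which is normally generated by $Y_1$ inside $F_2^{(1)}$; but to conjugate an element of $Y_1$ by a word in $a_1, b_1$ using only the generators of $H_r$ we must use the $z_{i,r}$, and conjugation by $z_{1,r}$ or $z_{2,r}$ acts on the first coordinate by $a_1$ resp.\ $b_1$ while simultaneously moving all the other coordinates — so one must argue that these extra contributions in coordinates $2, \dots, r$ can be cancelled cheaply. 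Concretely, I would take $v_n$ to be an iterated commutator word $v_n = w(a_1, b_1)$, rewrite each occurrence of $a_1^{\pm 1}, b_1^{\pm 1}$ as (the first coordinate of) $z_{1,r}^{\pm 1}, z_{2,r}^{\pm 1}$ and each occurrence of a generator of $\gamma_{r-1}$ by an element of $Y_1$, obtaining a word $\tilde v_n$ of length $\lesssim n$ in $X_r$ whose image in $H_r$ is $(g_n, u_n)$ for some $u_n \in \gamma_{r-1}(F_2^{(i)})$-indexed tail in coordinates $2, \dots, r$; because $v_n$ is an $(r-1)$-fold commutator in generators, each tail coordinate $u_n^{(i)}$ is an $(r-1)$-fold commutator in $a_i^{\pm}, b_i^{\pm}$ of length $\lesssim n$, hence lies in $\gamma_{r-1}(F_2^{(i)}) = H_r \cap F_2^{(i)}$, which is normally generated by $Y_i$; and crucially $u_n$ can be killed using $\lesssim n$ conjugates of elements of $Y_2 \cup \cdots \cup Y_r$ — here one uses that $\gamma_{r-1}(F_2^{(i)})$, viewed as a subgroup of $H_r$, is boundedly generated relative to the ambient word metric because conjugation of $Y_i$ elements by $a_i, b_i$ is implemented (up to controlled error, iteratively absorbed) by the $z_{j,r}$. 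Thus $h_n = (g_n,1)$ has $|h_n|_{H_r} \lesssim n$, completing the verification. Once this linear bound is in hand, Corollary \ref{cor:DistFibProdFree} and Remark \ref{rmkDistFibProdImp} (linearity of $\delta_{F_2^{(1)}}$) give $n^{r-1} \asymp \delta_{Q_r}(n/K) \lesssim \delta_{G_{2,r}}(C|h_n|_{H_r}) + |h_n|_{H_r} \lesssim \delta_{G_{2,r}}(C'n) + n$, hence $\delta_{G_{2,r}}(n) \succcurlyeq n^{r-1}$, which is the theorem.
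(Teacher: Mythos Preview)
Your overall strategy matches the paper's exactly: apply Corollary~\ref{cor:DistFibProdFree} with $G_1=F_2^{(1)}$, $G_2=G_{2,r}$, $Q=Q_r$, take $v_n$ to be the $(r-1)$-fold iterated commutator $w_n(a_1,b_1)=[a_1^n,[a_1^n,\dots,[a_1^n,b_1^n]\dots]]$ so that $\mathrm{Area}_{Q_r}(v_n)\asymp n^{r-1}$, and reduce everything to the bound $|h_n|_{H_r}\lesssim n$. You also correctly identify this last bound as the crux.

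The gap is in your argument for that bound. After substituting $z_{1,r},z_{2,r}$ for $a_1,b_1$ you obtain $(g_n,u_n^{(2)},\dots,u_n^{(r)})$ with each $u_n^{(i)}=w_n(a_i,b_i)\in\gamma_{r-1}(F_2^{(i)})$, and you propose to kill each $u_n^{(i)}$ using $\lesssim n$ conjugates of elements of $Y_i$. But the minimal number of conjugates of elements of $Y_i$ (a normal generating set for $\gamma_{r-1}(F_2^{(i)})$, i.e.\ a set of defining relators for $Q_r$) needed to express $w_n(a_i,b_i)$ is \emph{by definition} $\mathrm{Area}_{Q_r}(w_n)\asymp n^{r-1}$, not $\lesssim n$. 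So this route yields at best $|h_n|_{H_r}\lesssim n^{r-1}$, which is useless; and the appeal to ``$\gamma_{r-1}(F_2^{(i)})$ being boundedly generated relative to the ambient metric'' is exactly the statement you are trying to prove, so the argument is circular.

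The paper's fix is different and exploits the VSP property rather than the generators $Y_i$. Since $p_{1,j}(H_r)$ has finite index in $F_2^{(1)}\times F_2^{(j)}$, for each $j\in\{2,\dots,r\}$ there exist $x_{1,j},y_{1,j}\in H_r$ with $p_1(x_{1,j})=a_1^k$, $p_1(y_{1,j})=b_1^k$ and $p_j(x_{1,j})=p_j(y_{1,j})=1$ (for some fixed $k$). Now form the iterated commutator with a \emph{different} such element at each level:
\[
[x_{1,2}^n,[x_{1,3}^n,\dots,[x_{1,r-1}^n,y_{1,r}^n]\dots]].
\]
In coordinate $1$ this equals $w_{kn}(a_1,b_1)$; in each coordinate $j\in\{2,\dots,r\}$ one of the nested entries is trivial, so the whole commutator vanishes there. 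This word of length $\lesssim n$ in the generators of $H_r$ therefore equals $h_n$ on the nose, with no tails to kill. (The paper also supplies a short lemma checking the lower bound $|v_n|_{F_X}\gtrsim n$, which you dismissed as easy; it is indeed short, but does require an argument.)
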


\begin{proof}
By \cite[Proof of Theorem 5.3]{Ger-93} and \cite{GerHolRil-03}, the $(r-1)$-fold iterated commutators 
\[w_{n}(a_1,b_1)=\left[a_1^n,\left[a_1^n,\dots, \left[a_1^n,b_1^n\right]\dots\right]\right]
\] 
satisfy $\mathrm{Area}_{Q_r}(w_{n}(a_1,b_1))\asymp n^{r-1}\asymp \delta_{Q_r}(n)$. We will require the following auxiliary result.

\begin{lemma}
 There exists $k\geq 1$ such that the following holds: Let $g_{n}\in F_2^{(1)}\cap H_r=\gamma_{r-1}(F_2^{(1)})$ be the element represented by the word $w_{kn}(a_1,b_1)$ and let $h_n=\left(g_{n},1,\dots,1\right)\in H_r$. Then
 \[
 |h_{n}|_{H_r}\lesssim n.
 \]
 \label{lemUndist}
\end{lemma}

\begin{proof}
We use that the group $H_r$ satisfies the VSP property. Thus, there is a finite index subgroup $\Lambda_1\leq F_2^{(1)}$ such that $\Lambda_1 \times \{1\} \leq p_{1,j}(H_r)$ for $2\leq j \leq r$. Note that there is $k\geq 1$ such that $a_1^k,b_1^k\in \Lambda_1$. Thus, we can choose elements $x_{1,j}, y_{1,j}\in H_r$ satisfying
\[
\begin{array}{ll}
 p_1(x_{1,j})=a_1^k, & \hspace{.5cm} p_j(x_{1,j})=1,\\
 p_1(y_{1,j})=b_1^k, & \hspace{.5cm} p_j(y_{1,j})=1,
 \end{array}
\]
for $2\leq j \leq r$.

It is easy to see that the identity
\[
\left[x_{1,2}^n,\left[x_{1,3}^n,\dots,\left[x_{1,r-1}^n,y_{1,r}^n\right]\dots\right]\right]= \left(w_{nk}(a_1,b_1),1,\dots,1\right)=h_{kn}\in H_r
\]
holds. The word $u_n=\left[x_{1,2}^n,\left[x_{1,3}^n,\dots,\left[x_{1,r-1}^n,y_{1,r}^n\right]\dots\right]\right]$ having length $\leq 2^{2(r-1)} n$, this proves the lemma. 
\end{proof}

We are now ready to finish the proof of Theorem \ref{thmMainExplicit}.
We shall apply Corollary \ref{cor:DistFibProdFree}, with:
\begin{itemize}
\item  $X=X_r=X'\sqcup X''$, where $X'=Y_1$ and $X''=Y_2\cup \dots \cup Y_r \cup Z_r$;
\item $P=H_r$, $G_1=F_2^{(1)}$, $G_2=G_{2,r}$ and $Q=Q_r$;
\item $v_n=w_{kn}$.
\end{itemize}
As already observed the words $v_n$ have length $\lesssim n$, but in order to apply Corollary \ref{cor:DistFibProdFree} we need to show that $|v_n|_{F_{X'}}\simeq n$. This immediately results from the following lemma. 

\begin{lemma}
 Let $F_2=\left\langle a, b \right\rangle$ be the 2-generated free group equipped with its standard word length $|\cdot |_{F_2}$ and let $g\in F_2 \setminus \left\langle a \right\rangle$. Then the commutator $\left[a^n,g\right]$ satisfies
 \[
  \left|\left[a^n,g\right]\right|_{F_2}\geq 2n +2.
 \]
\label{lemCommLength}
\end{lemma}
\begin{proof}
 Since $g\notin \left\langle a \right \rangle $ every freely reduced word $w(a,b)$ representing $g$ can be decomposed as $w(a,b)= a^k b^{\pm 1} u(a,b) a^l$ for some $k,l\in \ZZ$ and $u(a,b)$ either the trivial word or such that $b^{\pm 1}u(a,b)$ is a freely reduced word ending in $b^{\pm 1}$. Thus, we have
 \[
  \left[a^n, w(a,b)\right] = a^{n+k} b^{\pm 1} u(a,b) a^{-n} u(a,b)^{-1} b^{\mp 1} a^{-k}
 \]
 and it follows immediately that
 \[
  \left|\left[a^n,g\right]\right|_{F_2}\geq 2 \left(n+k +l(u(a,b))+1\right)\geq 2n+2.
 \]
\end{proof}
So deduce from Corollary \ref{cor:DistFibProdFree} that 
\[n^{r-1}\lesssim \delta_{G_2}(C |h_n|_{H_r})+|h_n|_{H_r}.\]
By Lemma \ref{lemUndist}, there exists $C'\geq 1$ such that 
\[n^{r-1}\lesssim \delta_{G_2}(C' n)+n,\]
so $\delta_{G_2}(n)\succcurlyeq n^{r-1}$, as required.
\end{proof}

\begin{remark}
We want to remark that we do produce a finitely presented full subdirect product of a product of three 2-generated free groups which admits a cubical lower bound on its Dehn function. In particular, this group is virtually the kernel of a homomorphism from a product of three free groups onto a free abelian group, meaning that we obtain a similar result to \cite[Theorem 1.1]{Dis-09}. However, we do not know if our group $G_{2,3}$ is commensurable to the example in \cite{Dis-09}.

More generally, if we could write the groups in \cite{Dis-09} and \cite{LloTes-18} as quotients of finitely presented subgroups of products of four free groups (respectively surface groups) which are conilpotent of class two, then this would give an alternative approach to proving the cubic lower bounds obtained in these works. However, it is not clear that this would significantly simplify the proofs, as it requires the construction of such subgroups and the only known construction of finitely presented conilpotent, non-coabelian subdirect products of free groups is the one of Bridson, Howie, Miller and Short. While their work provides us with some control on the quotients, it does not provide us with the tools to do such a construction.
\end{remark}

\bibliography{References}
\bibliographystyle{amsplain}

\end{document}